\theoremstyle{plain}
\newtheorem{theorem}{Theorem}[section]
\newtheorem*{theorem*}{Theorem}
\newtheorem{proposition}[theorem]{Proposition}
\newtheorem{question}[theorem]{Question}
\theoremstyle{definition}
\newtheorem{remark}[theorem]{Remark}
\newcommand{\enm}[1]{\ensuremath{#1}}          %
\newcommand{\cal}[1]{\mathcal{#1}}
\newcommand{\PP}{\enm{\mathbb{P}}}
\newcommand{\KK}{\enm{\mathbb{K}}}
\newcommand{\Ii}{\enm{\cal{I}}}
\newcommand{\Oo}{\enm{\cal{O}}}
\newcommand{\Ss}{\enm{\cal{S}}}
\newcommand{\Vv}{\enm{\cal{V}}}
\renewcommand{\phi}{\varphi}
\renewcommand{\theta}{\vartheta}
\renewcommand{\epsilon}{\varepsilon}
\begin{document}

\title[Segre varieties]
{Tensor decompositions in rank $+1$}
\author{E. Ballico}
\address{Dept. of Mathematics\\
 University of Trento\\
38123 Povo (TN), Italy}
\email{ballico@science.unitn.it}
\thanks{The author was partially supported by MIUR and GNSAGA of INdAM (Italy).}
\subjclass[2020]{14N07; 14N05; 12E99; 12F99}
\keywords{Segre varieties; tensor decomposition; Veronese variety; Segre-Veronese variety}

\begin{abstract}
We prove (without exceptions) the existence of irredundant tensor decompositions with the number of addenda equal to rank $+1$.
We also discuss the existence of decompositions with more than the tensor rank terms, which are concise, while the original tensor is not concise.
\end{abstract}

\maketitle

\section{Introduction}

Fix $q\in \PP^N$ and a finite subset $A\subset \PP^N$. As in \cite{bbcg} and most references we say that $A$ \emph{irredundantly spans} $q$
if $q\in \langle A\rangle$, where $\langle \ \ \rangle$ denote the linear span, and $q\notin \langle A'\rangle$ for any $A'\subsetneq A$.

Now take an integral and non-degenerate variety $X\subset \PP^N$. For any $q\in \PP^N$ the $X$-rank $r_X(q)$ of $X$ is the minimal cardinality
of a set $A\subset X$ such that $q\in \langle A\rangle$. The minimality assumption implies that $A$ irredundantly spans $q$. Let $\Ss
(X,q)$ be the set of all $A\subset X$ such that $\# A =r_X(q)$ and $q\in \langle A\rangle$. For any positive integer $t$ let
$\Ss (X,q,t)$ denote the set of all $A\subset X$ such that $\# A =t$ and $A$ irredundantly spans
$q$. Obviously $\Ss (X,q,t) =\emptyset$ for all $t\ge N+2$ and, since $X$ is non-degenerate, $\Ss (X,q,N+1) \ne \emptyset$. Obviously  $\Ss (X,q,t) =\emptyset$ for $t<r_X(q)$ and $\Ss (X,q,r_X(q)) =\Ss (X,q)\ne \emptyset$. For many $X$ and $q$ there are gaps, i.e the are integers $q$ and $t$ such that $r_X(q)<t \le N$ and $\Ss (X,q,t)=\emptyset$.
This is not pathological, it is well-known that it may occur even when $X$ is the Veronese varieties (see Remarks \ref{v1} and \ref{v2} for explicit examples).
One of the main results of \cite{bbcg} was that  ``for not too special $q$''  this is not the the case when $X$ is the Segre variety, i.e. \cite[Theorem 3.8]{bbcg} may be restated in the following equivalent way.

\begin{theorem}\label{1i0}
Let $X\subset \PP^N$ be a Segre variety with $\dim X>0$. Fix a linearly independent set $S\subset X$ such that $\#S < N$. For a
general
$q\in
\langle S\rangle$ we have $\Ss (X,q,t)\ne \emptyset$ for any integer $t$ such that $\#S < t\le N+1$.
\end{theorem}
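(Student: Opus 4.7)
The plan is to work by upward induction on $t$, starting from $A_{\#S}:=S$ (which irredundantly spans the general $q\in\langle S\rangle$, because a general element of the linear span of a linearly independent set has all coordinates nonzero in that basis), and producing from each $A_t\subset X$ an irredundant $(t+1)$-spanning set $A_{t+1}\subset X$, continuing until $t+1=N+1$.

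The inductive step $t\to t+1$ will rest on a \emph{line-splitting} trick made possible by the Segre hypothesis. Write $A_t=\{a_1,\ldots,a_t\}$ and $q=\sum_{j=1}^t c_j a_j$ with all $c_j\ne 0$ (this is exactly the irredundancy of $A_t$ given its linear independence). Pick some $a_i\in A_t$; since $\dim X>0$, the Segre $X=\PP^{n_1}\times\cdots\times\PP^{n_k}$ carries, through $a_i$, a positive-dimensional family of lines contained in $X$ (each factor with $n_j\ge 1$ contributes a pencil of ruling lines). Choose such a line $L$ generically and two generic points $p,p'\in L\setminus\{a_i\}$; then $a_i=\alpha p+\beta p'$ with $\alpha\beta\ne 0$, and substitution yields $q=\sum_{j\ne i}c_j a_j+(c_i\alpha) p+(c_i\beta) p'$. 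Hence $A_{t+1}:=(A_t\setminus\{a_i\})\cup\{p,p'\}$ has size $t+1$ and expresses $q$ with all nonzero coefficients. Generic choice of $L$ (and of $p,p'$ on $L$) also preserves linear independence: this is possible while $t+1\le N+1$, since $L$ varies in a positive-dimensional family through $a_i$ and the condition $L\subset\langle A_t\rangle$ cuts out a proper closed subset.

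The hard part will be coordinating the successive generic choices so that, starting from the \emph{single} given general $q\in\langle S\rangle$, every inductive step can still be performed; the generic quantifier over $q$ must not be traded away by the local choices of lines. The standard remedy is an incidence-and-irreducibility argument: set $\Ii_t:=\{(A,q):A\subset X,\,\#A=t,\,A\text{ linearly independent},\,q\in\langle A\rangle\}$; this is irreducible for each $t\le N+1$, and the construction above shows that the projection $\Ii_t\to\PP^N$ dominates a subvariety meeting $\langle S\rangle$ in a nonempty open set. Combining this with the openness of the irredundancy condition (``all $c_j$ nonzero'') transports $\Ss(X,q,t)\ne\emptyset$ to every $q$ in a dense open subset of $\langle S\rangle$. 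Finally, the endpoint $t=N+1$ can also be handled without induction: any $N+1$ general points of $X$ form a projective frame of $\PP^N$, and a generic $q$ has all coordinates in that frame nonzero, yielding the required irredundant spanning set directly.
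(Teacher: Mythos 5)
Note first that the paper does not actually prove Theorem \ref{1i0}: it is quoted as a restatement of \cite[Theorem 3.8]{bbcg}. The closest in-paper analogue is the proof of Theorem \ref{1i01}, whose step (a) uses exactly your line-splitting idea: delete a point $a$ of a decomposition and replace it by two points $u,v$ on a ruling line $L\subset X$ through $a$ with $L\nsubseteq \langle \nu(A)\rangle$. Your base case, your reduction of irredundancy to ``all coordinates nonzero'' for a linearly independent set, and the terminal case $t=N+1$ are all fine. The problem is the one clause on which the whole induction hangs.

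The gap is the existence, at each stage, of a line $L\subset X$ through some $a_i\in A_t$ with $L\nsubseteq\langle A_t\rangle$. Your justification --- that $L$ varies in a positive-dimensional family through $a_i$ and the condition $L\subset\langle A_t\rangle$ cuts out a proper closed subset --- fails on both counts. If every factor of the Segre is a $\PP^1$ (e.g.\ $X=\PP^1\times\PP^1\times\PP^1\subset\PP^7$), the lines of $X$ through a fixed point form a \emph{finite} set (one per factor), so there is no generic choice to make; and even when the family is positive-dimensional, nothing you have said prevents the closed condition from being the whole family: that happens precisely when $\langle A_t\rangle$ contains the span $F_i(a)=\{a_1\}\times\cdots\times\PP^{n_i}\times\cdots\times\{a_k\}$ for every $a\in A_t$ and every $i$, i.e.\ when $\langle A_t\rangle$ contains the embedded tangent space of $X$ at each point of $A_t$. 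This degenerate case is not a formality: step (b) of the paper's proof of Theorem \ref{1i01} is devoted entirely to it, and the contradiction obtained there leans on the minimality of the decomposition (via Remark \ref{0x0} and an iterated replacement of $a$ by general points of the $F_i$'s), which is unavailable for your sets $A_t$ of non-minimal cardinality $t>r_X(q)$. The hypothesis that actually rescues the statement --- $q$ \emph{general} in $\langle S\rangle$ --- enters your argument only through the concluding ``incidence-and-irreducibility'' paragraph, which is a plan rather than a proof: you neither verify irreducibility of the relevant locus of $\Ii_t$ lying over $\langle S\rangle$ nor explain how dominance onto a dense open subset of $\langle S\rangle$ follows from a construction that, as written, can get stuck. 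Until the line-existence step is established (or the degenerate configuration is excluded using the generality of $q$), the induction does not close.
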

Note that for any linearly independent finite set $S\subset X$ the set $S$ irredundantly spans a general $q\in \langle S\rangle$ (we may take any $q$ in the complement of $\#S$ hyperplanes of $\langle S\rangle$). In this paper
when $t =r_X(q)+1$ we show that there is no exception. We prove the following result.

\begin{theorem}\label{1i01}
Let $X\subset \PP^N$ bea Segre variety with $\dim X>0$.  For any $q\in \PP^N$ we have $\Ss (X,q,r_X(q)+1)\ne \emptyset$.
\end{theorem}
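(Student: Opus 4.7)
The plan is to take a minimal decomposition $S=\{p_1,\dots,p_r\}\in \Ss(X,q)$ with $r=r_X(q)$, then replace a single chosen $p_i\in S$ by two points $a,b\in X$ collinear with $p_i$, producing an irredundant spanning set of size $r+1$.

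Step 1 (set-up and coefficient nonvanishing). Since $r$ equals the rank, $S$ must be linearly independent (otherwise $q$ would lie in the span of a proper subset of $S$), so $\Pi := \langle S\rangle$ is a projective subspace of dimension $r-1$, and $q$ has the unique expression $q=\sum_{i=1}^r c_i p_i$ with all $c_i\ne 0$ (any vanishing coefficient would immediately drop the rank).

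Step 2 (key geometric step). I would show that some index $i$ and some line $\Lambda\subset X$ through $p_i$ satisfy $\Lambda\not\subset \Pi$. Since $X$ is a Segre variety with $\dim X>0$, every point of $X$ lies on lines contained in $X$ (fiber lines obtained by varying a single projective factor), so such lines abundantly exist. To force one outside $\Pi$, I argue by contradiction: if every line of $X$ through every $p_i$ lay in $\Pi$, then the fiber lines through $p_i$ would collectively span the projective tangent space $T_{p_i}X$ of the Segre, so $T_{p_i}X\subset \Pi$ for all $i$. This forces the Terracini span $\langle T_{p_1}X,\dots,T_{p_r}X\rangle$ into $\Pi$; a Terracini-type argument then pushes $\sigma_r(X)$ into $\Pi$, which via $X\subset\sigma_r(X)$ contradicts the non-degeneracy of $X$ in $\PP^N$ whenever $r\le N$ (the only range in which $\Ss(X,q,r+1)\ne\emptyset$ is even combinatorially possible, since $\Ss(X,q,t)=\emptyset$ for $t\ge N+2$).

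Step 3 (construction of $A$). Given such a $\Lambda$, pick distinct $a,b\in\Lambda\setminus\{p_i\}$. Because $\Lambda\not\subset\Pi$ but $p_i\in\Lambda\cap\Pi$, the line meets $\Pi$ only at $p_i$, so $a,b\notin\Pi$. Writing $p_i=\alpha a+\beta b$ with $\alpha,\beta\ne 0$ and substituting into the decomposition yields
\[
q \;=\; \sum_{j\ne i} c_j p_j \;+\; (c_i\alpha)\,a \;+\; (c_i\beta)\,b,
\]
an expression with $r+1$ terms, all coefficients nonzero. Setting $A:=(S\setminus\{p_i\})\cup\{a,b\}$, one checks $A$ is linearly independent: $\{p_j:j\ne i\}$ spans an $(r-2)$-plane inside $\Pi$; adjoining $a\notin\Pi$ gives an $(r-1)$-plane $\Pi'$ whose intersection with $\Pi$ is exactly $\langle p_j:j\ne i\rangle$, so $p_i\notin\Pi'$; and $b$ lies on the line $\langle a,p_i\rangle$, which meets $\Pi'$ only in $a$, so $b\notin\Pi'$. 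Linear independence of $A$ plus nonvanishing of every coefficient in the above expression give at once that $A$ irredundantly spans $q$, so $A\in \Ss(X,q,r+1)$.

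The main obstacle is Step 2: excluding the degenerate configuration in which every Segre line through every $p_i$ lies in $\Pi$. The dimension bound $\dim T_{p_i}X=\dim X>r-1=\dim\Pi$ handles the case $r\le \dim X$ immediately, but when $r>\dim X$ the tangent spaces can in principle fit into $\Pi$, and the Terracini argument above needs care because $q$ is not assumed generic on $\sigma_r(X)$. A careful treatment of this range — leveraging the explicit product structure of the Segre (so that if a slice $\{p_i\}$-cross is in $\Pi$ then $\Pi$ contains an entire linear $\PP^{n_j}$) and possibly a concision reduction to the smallest sub-Segre containing the $p_i$'s — is where the heart of the argument lies.
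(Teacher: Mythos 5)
Your Steps 1 and 3 are correct and coincide with the core of the paper's argument (step (a) of its proof): one replaces a single point $p_i$ of a minimal decomposition by two points of a Segre line through $p_i$ that is not contained in $\Pi=\langle S\rangle$, and your verification of linear independence and irredundancy via the uniqueness and nonvanishing of the coefficients is clean and, if anything, tidier than the paper's. The genuine gap is exactly where you suspect it: Step 2. The Terracini argument you sketch does not work. Knowing $T_{p_i}X\subseteq \Pi$ for the $r$ \emph{specific} points $p_1,\dots,p_r$ gives no control on $\sigma_r(X)$: Terracini's lemma identifies the tangent space of $\sigma_r(X)$ at a \emph{general} point of the span of \emph{general} points, and tangency conditions at finitely many specific points of $X$ never confine $\sigma_r(X)$ (which contains all of $X$) to a proper linear subspace. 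So in the range $r>\dim X$, which does occur for Segre varieties, your contradiction is not established, and the proof is incomplete.

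The paper closes this case by an iteration rather than by tangent spaces. Suppose that for every $a=(a_1,\dots,a_k)\in A$ and every factor index $i$ the entire fiber $F=\nu(\{a_1\}\times\cdots\times\PP^{n_i}\times\cdots\times\{a_k\})$ lies in $U:=\langle\nu(A)\rangle$ (this is the negation of the existence of your line $\Lambda$). Then one may replace $a$ by a general point $o$ of its first fiber: since $\nu(A)$ is linearly independent and the fiber meets $A$ only at $a$ (Remark \ref{0x0} of the paper), the new set $A_1=(A\setminus\{a\})\cup\{o\}$ still spans $U$ and is again a minimal decomposition of $q$, so the dichotomy applies to it as well. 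Repeating with the second factor, then the third, and so on, either one lands in case (a) at some stage, or after $k$ steps $U$ contains $\nu(o_1,\dots,o_k)$ for general $o_i$, hence $U\supseteq X$; since $r_X(q)\le N$ forces $\dim U\le N-1$, this contradicts the non-degeneracy of $X$. Your parenthetical remark that a fiber contained in $\Pi$ lets you move the corresponding coordinate freely is precisely the right germ of this argument, but it must be iterated across all factors and across the newly created decompositions, not applied only to the original $S$.
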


We also prove that concision fails at the level of the tensor rank $+1$, again with no exceptions on $q$. We prove the following result.

\begin{theorem}\label{1i2}
Fix multiprojective spaces $Y\subsetneq W$ and let $\nu: W\to \PP^N$ be the Segre embedding of $W$. Fix $q\in \langle \nu (Y)\rangle$.
Then there is $B\subset W$ such that $B\nsubseteq Y$ and $\nu (B)\in \Ss (\nu (W),r_{\nu(Y)}(q)+1)$.
\end{theorem}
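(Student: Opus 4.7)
The plan is to construct $B$ directly, without invoking Theorem~\ref{1i01}, by starting from a minimal $\nu(Y)$-decomposition of $q$ and ``splitting'' one of its points into two collinear points on a line of $\nu(W)$ that escapes $\langle\nu(Y)\rangle$. A guiding observation is the standard fact $\langle\nu(Y)\rangle\cap\nu(W) = \nu(Y)$ for any multiprojective inclusion $Y\subsetneq W$; it is immediate from the coordinate description of $\langle\nu(Y)\rangle$ as the linear subspace cut out by vanishing of the tensor entries with multi-index outside $\prod\{0,\dots,m_j\}$. Consequently $\nu(w)\notin\langle\nu(Y)\rangle$ whenever $w\in W\setminus Y$, and so any irredundant decomposition of $q\in\langle\nu(Y)\rangle$ in $\nu(W)$ that is not contained in $\nu(Y)$ must differ from a $Y$-decomposition in at least two points. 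The trick of ``splitting along a line'' accomplishes this with only a net increase of one in cardinality.

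To execute, set $r := r_{\nu(Y)}(q)$ and fix a minimal decomposition $A_0\subset Y$ of $q$; minimality forces $\nu(A_0)$ to be linearly independent and every coefficient in $q = \sum_{y\in A_0} c_y\,\nu(y)$ to be non-zero. Pick any $y\in A_0$ and any factor index $i$ with $m_i<n_i$ (which exists since $Y\subsetneq W$). The ``ruling'' $\Lambda_i(y) := \nu(\{y_1\}\times\cdots\times\PP^{n_i}\times\cdots\times\{y_s\})$ is a linear $\PP^{n_i}\subset\nu(W)$ whose intersection with $\langle\nu(Y)\rangle$ is exactly the sub-$\PP^{m_i}$ obtained by restricting the $i$-th factor. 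I then choose a line $\ell\subset\Lambda_i(y)$ through $\nu(y)$ not contained in that sub-$\PP^{m_i}$, so that $\ell\cap\langle\nu(Y)\rangle = \{\nu(y)\}$, and pick two distinct points $p_1,p_2\in\ell\setminus\{\nu(y)\}$. Since $p_1,p_2\notin\langle\nu(Y)\rangle$, their Segre preimages $w_1,w_2\in W$ lie in $W\setminus Y$; finally I set $B := (A_0\setminus\{y\})\cup\{w_1,w_2\}$, which has $\#B = r+1$ and $B\nsubseteq Y$.

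It remains to verify that $\nu(B)$ irredundantly spans $q$. Linear independence of $\nu(B)$ will reduce to $\ell\cap L_0 = \emptyset$ for $L_0 := \langle\nu(A_0\setminus\{y\})\rangle\subset\langle\nu(Y)\rangle$, which follows from $\ell\cap\langle\nu(Y)\rangle = \{\nu(y)\}$ together with $\nu(y)\notin L_0$ (the latter by linear independence of $\nu(A_0)$). Writing $\nu(y) = \alpha p_1+\beta p_2$ with $\alpha,\beta\neq 0$ (the three collinear points $\nu(y),p_1,p_2$ are distinct) and substituting into the expansion of $q$ yields an expression on $\nu(B)$ with every coefficient non-zero; linear independence then forces uniqueness of this expression, so irredundance is automatic. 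The main technical point I expect to handle with some care is the transversality claim $\ell\cap\langle\nu(Y)\rangle = \{\nu(y)\}$, which rests on the coordinate identification of $\Lambda_i(y)\cap\langle\nu(Y)\rangle$ with the sub-$\PP^{m_i}$ above; once that is in place the rest is direct linear algebra.
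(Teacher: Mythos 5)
Your construction is exactly the paper's: take a minimal decomposition $A_0\subset Y$ of $q$, pick a point $y\in A_0$ and a factor where $Y$ is strictly smaller than $W$, and replace $\nu(y)$ by two points of a ruling line of $\nu(W)$ through $\nu(y)$ that leaves $\langle\nu(Y)\rangle$. Where you genuinely diverge is in the verification of irredundance, and also in that you skip the paper's preliminary reduction to the codimension-one case $\dim W=\dim Y+1$ (the paper arranges $Y=T\times H$, $W=T\times\PP^{n_k}$ with $H$ a hyperplane, which is just a normal form for the same line $\ell$ you choose inside $\Lambda_i(y)$). The paper certifies irredundance by appealing to part (a) of the proof of Theorem \ref{1i01}: one assumes a proper spanning subset $K\subsetneq B$ exists, notes $\#K=r_X(q)$ so $\nu(K)\in\Ss(X,q)$, rules out $\{u,v\}\subseteq K$ via Remark \ref{0x0}, and reaches a contradiction from $\nu(u)\in\langle\nu(A)\rangle$; the key geometric input there is that the ruling $\nu(D)$ meets $\langle\nu(A)\rangle$ only inside $\langle\nu(Y)\rangle$. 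Your route replaces all of this with direct linear algebra: $\ell\cap\langle\nu(Y)\rangle=\{\nu(y)\}$ gives $\ell\cap\langle\nu(A_0\setminus\{y\})\rangle=\emptyset$, hence $\nu(B)$ is linearly independent, and then writing $\nu(y)=\alpha p_1+\beta p_2$ exhibits $q$ on $\nu(B)$ with all coefficients non-zero, which for a linearly independent set is equivalent to irredundant spanning. This buys a shorter, self-contained argument that does not need Remark \ref{0x0} or the rank-minimality of sub-decompositions, at the cost of having to prove the transversality statement $\Lambda_i(y)\cap\langle\nu(Y)\rangle=\PP^{m_i}$ explicitly (the ``concision for rank one tensors'' fact, which the paper also uses implicitly); your coordinate argument for it is correct. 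I see no gap in the proposal.
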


By concision (\cite[Proposition 3.1.3.1]{l}), Theorem \ref{1i2} is false for all tensors $q\in \langle \nu (Y)\rangle$ if we look at decompositions of the tensor $q$ with at most $r_{\nu(Y)}(q)$
terms. Note that concision holds also for Veronese embeddings (\cite[Ex. 3.2.2.2]{l}), hence the difference between Segre
varieties (i.e. tensors and tensor decompositions) and Veronese varieties (i.e. additive decompositions of homogeneous
polynomials) comes only for decompositions with number of components not minimal, but that the corresponding result for
Theorem \ref{1i2} fails for all $q$ (Remarks \ref{v1} and \ref{v2} and Proposition \ref{v3}).

In  section \ref{St} we look at the following problem. 

Suppose you have a Segre variety $X\subset \PP^N$ and a smaller Segre
variety
$X'\subsetneq X$. Take a a tensor
$q\in \langle X'\rangle$ which is concise for $X'$, i.e. there is no smaller Segre variety $X''\subsetneq X'$ such that $q\in \langle \nu (X'')\rangle$. Take any tensor decomposition $A\in \Ss (X,q)$. By concision we have $r_X(q) =r_X(q')$ and $A\subset X$ (\cite[Proposition 3.1.3.1]{l}). A finite set $S\subset X$ is said to be \emph{concise for $X$} if there is no smaller Segre variety $X''\subsetneq X$ such that $S\subset X''$. Given any finite set $S\subset X$ it is easy to determine the minimal Segre variety $X''\subseteq X$ containing $S$ and this Segre variety $X''\subseteq X$ is the only Segre variety $X'\subseteq X$ such that $S\subset X'$ and $S$ is concise for $X'$ (Remark \ref{t1}).

\begin{question}\label{q1}
Let $X\subset \PP^N$ be a Segre variety and $X'\subsetneq X$ a smaller Segre variety. Fix $q\in \langle X'\rangle$ which is concise for $X'$. Compute the minimal integer $r_{X'}(q)$ such that there is $B\in \Ss (X,q,t)$ which is concise for $X$.
\end{question}

When $X\cong X'\times \PP^m$ this is the content of Proposition \ref{t1}.

In section \ref{Ss} we give a few remarks on the Segre-Veronese varieties obtained gluing together ideas and proofs given
for the Segre varieties and the Veronese varieties.

We give the following motivation for the results and problems considered in this paper. Suppose you have a finite set
$S\subset X\subset \PP^r$ with $S$ linearly independent. Write $\PP^r$ as a proiectivization of a vector space $V$. For each
$p\in S$ chose some $v_p\in V\setminus \{0\}$. Call $\KK$ your algebraically closed base field. The vector space $W\subset V$
corresponding to the projective space $\langle S\rangle$ is the set of all $v_q=\sum _{p\in S} c_pv_p$ with $c_p\in \KK$. The
point $q$ associated to $V$ is irredundantly spanned by $S$ if and only if $c_p\ne 0$ for all $p\in S$. Now assume that $X$ is
a Segre variety, so that
$v_q$ is a tensor and $v_q=\sum _{p\in S} c_pv_p$. Having $S$ it is very easy,  effective and cheap to find the minimal Segre
$X'\subseteq X$ containing $S$. Suppose
$X'=X$.
Is it possible to measure how far is $q$ from being certified to be concise, i.e. to give an upper bound on the integer $\dim X-\dim X'$, for instance as a function of $\#S -r_X(q)$? If $\#S > r_X(q)$ we show this in some cases,
e.g., Proposition \ref{t1}. We also point out that for a Segre with at least $4$ factors it is very time consuming to insert in a
computer all entries in fixed bases. Thus tensor decompositions may be used to define the tensor in a cheap way, expecially
if we need many tensors associated to the same set $S$. It is sufficient to precompute each $v_p$ and then for each tensor
it is sufficient to give $\#S$ elements of the field.

\section{Proofs of theorems and examples on the Veronese variety}

\begin{remark}\label{0x0}
Let $Y =\PP^{n_1}\times \cdots \times \PP^{n_k}$, $k\ge 1$, $n_i>0$ for all $i$, be a multiprojective space and let $\nu : Y\to \PP^N$ its Segre embedding. Set $X:= \nu (Y)$. Fix $i\in \{1,\dots ,k\}$
and $o_j\in \PP^{n_j}$, $j
\ne i$. Set $F\subset Y$ be the multiprojective subspace (isomorphic to $\PP^{n_i}$) with $\PP^{n_i}$ as its $i$-th factor and
$\{o_j\}$ as its $j$-th factor. Fix $q\in \PP^N$ and take $A\subset Y$ such that $\nu (A)\in \Ss (X,q)$.

\quad{\bf Claim 1:} We have $\#(A\cap F)\le 1$.

\quad{\bf Proof of Claim 1:} Assume $\#(A\cap F)\ge 2$ and take $u, v\in F$ such that $u\ne v$, say $u =(u_1,\dots ,u_k)$, $v = (v_1,\dots ,v_k)$ with $u_j=v_j =o_j$ for all $j\ne i$
and $v_i\ne u_i$. Set $A':= A\setminus \{u,v\}$. Let $D\subseteq \PP^{n_i}$ be the line spanned by $\{u_i,v_i\}$. Let $L\subset Y$ be the set of all $(x_1,\dots ,x_k)\in Y$ such that $x_j=o_j$ for all $j\ne i$
and $x_i\in D$. The set $\nu (L)\subset \PP^N$ is a line containing $2$ points of $A$. Thus $\langle \nu (L\cup A')\rangle =\langle \nu( A)\rangle$.
Since $\nu (L)$ is a line, there is $a\in L$ such that $q\in \langle \nu (A'\cup \{a\})\rangle$. Thus $r_X(q) < \#A$, a contradiction.
\end{remark}

In the last remark we used in an essential way that $\nu (A)\in \Ss (X,q)$, not just that $\nu (A)$ irredundantly span $q$
(see for instance the proof of Theorem \ref{1i0}).

\begin{proof}[Proof of Theorem \ref{1i01}:]
Set $a:= r_X(q)$. Since $\dim X>0$, we have $a\le N$ (\cite[Proposition 5.1]{lt}). Fix $A\in \Ss (X,q)$ and $a\in A$. Set $A':= A\setminus \{a\}$. Write $X=\nu (Y)$ with $Y$ a multiprojective space,
say $Y =\PP^{n_1}\times \cdots \times \PP^{n_k}$ with $n_i>0$ for all $i$, and $\nu$ the Segre embedding of $Y$. Write $a =(a_1,\dots
,a_k)$.
Set $U:= \langle \nu(A)\rangle$, $E:= \PP^{n_1}\times \{a_2\}\cdots \times \{a_k\}$ and $F:= \nu (E)$. Note that $F$ is a
linear subspace of $\PP^N$.  By construction we have $\nu (a)\in F$. Thus $F\cap U$ is a non-empty linear subspace of $U$. By
Remark \ref{0x0} we have
$A\cap E = \{a\}$, i.e. $\{\nu(a)\} = F\cap \nu (A)$. Set $A':= A\setminus \{a\}$.

\quad (a) Assume $F\nsubseteq U$. Since $A\cap E =\{a\}$, the set $\langle \nu (A')\rangle$ is a linear subspace of $F$ with codimension at least $2$. Take a general line
$L\subseteq
\PP^{n_1}$ containing
$a_1$. Fix a general $(u_1,v_1)\in L\times L$. In particular $u_1\ne v_1$, and neither $u_1$ nor $u_2$ is the first coordinate
$u_1,v_1\in L\setminus
\{a_1\}$ such that
$u_1\ne v_1$. Set $u =(u_1,\dots ,u_k)$ and $v =(v_1,\dots ,v_k)$ with $u_i=v_i=a_i$ for $i=2,\dots ,k$. Set $B:= A'\cup
\{u,v\}$. Since
$a_1\in \langle \{u_1,v_1\}\rangle$ and $\nu (L)$ is a line of $\PP^N$, we have $q\in \langle \nu (B)\rangle$. We may take $u_1$
and $v_1$ with the additional restriction that none of them is the first coordinate of a point of $A'$. With this restriction
we have $\#B=a+1$. Since $\langle \nu (A')\rangle$ is a linear subspace of $F$ with codimension at least $2$, $a\notin A'$ and $L$ is general, we have $\dim
(\langle \nu (B)\rangle \cap F) = \dim  (\langle \nu (A')\rangle \cap F) +2$. Since $\#B =\#A'+2$, $\nu(B)$ is linearly
independent. Since $a_1\in \langle
\{u_1,v_1\}\rangle$ and $\nu(L)$ is a line, we have $U\subseteq \langle \nu (B)\rangle$ and in particular $q\in \langle \nu
(B)\rangle$. To conclude the proof it is sufficient to prove that $\nu(B)$ irredundantly spans $q$. Assume that this is not
true and take a minimal $K\subsetneq B$ with $q\in \langle \nu(K)\rangle$. Since $r_X(q) =a$ and $\#B =a+1$, we have $\#K =a$.
Thus $\nu (K)\in \Ss (X,q)$. Remark \ref{0x0} gives $\{u,v\}\nsubseteq K$. Thus $\#K \cap \{u,v\} =1$, say $K = A'\cup \{u\}$.
Since $q\in U\cap \langle \nu (K)\rangle$, $K\cap A =A'$ and $q\notin \langle \nu (A')\rangle$, we have $\langle K\rangle =U$,
Since $u\in K$, we get $\nu(u)\in U$, contradicting our choice of $u_1$.

\quad (b) By step (a) we may assume $F\subseteq U$ for any choice of the point $a =(a_1,\dots ,a_k)\in A$ and any choice of the index $i\in \{1,\dots ,k\}$ (in step (a) we chose
$i=1$).  Fix a general $o_1\in E$ and write $o:= (o_1,a_2,\dots ,a_k)$ and $A_1:= A'\cup \{o\}$. Since $E\cap A =\{a\}$ by
Remark \ref{0x0}, we get $U =\langle A_1\rangle$. Using $i=2$ and $o$ instead of $a$ we get $U = \langle \nu (A_2)\rangle$,
where $A_2 =A' \cup w$ with $w =(o_1,o_2,a_3,\dots ,k)$ with $o_2$ a general element of $\PP^{n_2}$. In $k-2$ steps using
$i=3,\dots ,k$ we get that $U$ contains a general point of $X$, contradicting the inequality $a\le N$.
\end{proof}

\begin{proof}[Proof of Theorem \ref{1i2}:]  

Write $W =\PP^{n_1}\times \times \cdots \PP^{n_k}$ with $n_i>0$ for all $i$. Up to a permutation of the factors of $W$
we may assume $Y= \PP^{m_1}\times \cdots \times \PP^{m_k}$ with $0\le m_i\le n_i$ for all $i$ and that there is an integer $s\in \{1,\dots ,k\}$ such that
$m_i=0$ if and only if $i>s$. Thus $\dim W -\dim Y = n_1+\cdots +n_k-m_1-\dots -m_s$. Taking a smaller multiprojective space if necessary we may assume $\dim W =\dim Y+1$.
Thus either $k=s+1$, $n_k=1$ and $m_i=n_i$ for all $i$ or $k=s$ and $n_i\ne m_i$ for a unique $i$. In the latter case up to permuting the factors of $W$ we may assume $n_k =m_k-1$. Thus, setting $T:= \prod _{i=1}^{k-1}\PP^{n_i}$, we may unify the cases saying that in both cases there is a hyperplane $H\subset \PP^{n_k}$ such that $Y =T\times H$
and $W =T\times \PP^{n_k}$. Set $a:= r_{\nu (Y)}(q)$ and take $A\subset Y$ such that $\nu(A)\in \Ss (\nu(Y),q)$. By concision we have $a =r_{\nu (W)}(q))$  (\cite[3.2.2.2]{l}).
Set $U:= \langle \nu (A)\rangle$. We modify the proof of Theorem \ref{1i01} in the following way. 
Fix $a\in A$ and write $a=(a_1,\dots ,a_k)$ with $a_k\in H$. Set $A':= A\setminus \{a\}$. Since $A\subset Y$, we have $\langle \nu (A)\rangle \subseteq \langle \nu (Y)\rangle$
Fix $u_k,v_k\in \PP^{n_k}\setminus H$ such that $u_k\ne v_k$ and $a_k\in \langle \{u_k,v_k\}\rangle$. Write $u =(u_1,\dots ,u_k)\in W$ and $v =(v_1,\dots ,v_k)\in W$ with $u_i=v_i=a_i$ for all $i<k$. Set $B:= A'\cup \{u,v\})$. Since $A'\cap \{u,v\} =\emptyset$, we have $\#B =r_{\nu(Y)}(q)+1$. Since $\nu (a)\in \langle \{\nu(u),\nu(v)\}\rangle$.
Write $D:= \{a_1\}\times \cdots \times \{a_{k-1}\}\times \PP^{n_k}$. Since $A\subset Y$, we have $\langle \nu (A)\rangle \subseteq \langle \nu (Y)\rangle$. Thus $\langle \nu (A)\rangle \cap \nu (D)\subseteq \langle \nu (Y)\rangle \cap D = \{a_1\}\times \cdots \times \{a_{k-1}\}\times H$. Part (a) of the proof of Theorem \ref{1i01} shows that $\nu(B)$
irredundantly spans $q$. By construction $B\nsubseteq Y$.
\end{proof}

\begin{remark}\label{v1}
Let $X\subset \PP^N$, $N = -1+\binom{n+d}{n}$, be Veronese variety which is an order $d$ embedding of $\PP^n$, $n\ge 1$, $d\ge 3$. Fix $q\in \PP^N$ with $ r_X(q) \le \lfloor (d+1)/2\rfloor$. Since any $d+1$ points of $X$ are
linearly independent, it is easy the check that $\Ss (X,q,t)=\emptyset$ for all $t$ such that $r_X(q) < t\le d+1-r_X(q)$. 
\end{remark}

\begin{remark}\label{v2}
Let $X\subset \PP^N$, $N = -1+\binom{n+d}{n}$, be Veronese variety which is an order $d$ embedding of $\PP^n$, $n\ge 2$, $d\ge 5$. Fix $q\in \PP^N$ with border
rank $2$ and $r_X(q) >2$. By \cite[Theorem 32]{bgi} we have $r_X(q)=d$. Using \cite[Lemma 3.4]{bgi} it is easy to check that
$\Ss (X,q,t)=\emptyset$ for all $t$ such that $d+1\le t\le 2d-2$.
\end{remark}

\begin{proposition}\label{v3}
Fix integers $d>0$ and $n>m > 0$. Let $\nu : \PP^n\to \PP^N$, $N =\binom{n+d}{n}-1$ be the order $d$ Veronese embedding. Fix an $m$-dimensional linear subspace $M\subset
\PP^n$ and $q\in \langle \nu (M)\rangle$. Take any positive $t$ such that there is $S =\nu (A)\in \Ss (\nu(Y),q,t)$ such that
$M =\langle A\rangle$. Then there is $E=\nu (B) \in \Ss(X,q,t+d(n-m))$ such that $\langle B\rangle =\PP^n$.
\end{proposition}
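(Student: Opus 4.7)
The natural approach is induction on the codimension $n - m$. The base case $n - m = 0$ is trivial (take $B = A$), and the inductive step reduces to the following single enlargement: from $A \subset M$ with $\langle A \rangle = M$, $\#A = t$, and $\nu(A)$ irredundantly spanning $q$, produce $B_1$ of size $t + d$ spanning an $(m+1)$-dimensional linear subspace $M_1 \supsetneq M$ and such that $\nu(B_1)$ still irredundantly spans $q$. Iterating this $n - m$ times yields the desired $B$.

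For one enlargement I would pick any $p \in A$, choose a line $L \subset \PP^n$ through $p$ with $L \not\subset M$, and select $d + 1$ distinct points $q_0, \ldots, q_d \in L \setminus \{p\}$. Set $A' := A \setminus \{p\}$ and $B_1 := A' \cup \{q_0, \ldots, q_d\}$; clearly $\#B_1 = t + d$ and $\langle B_1 \rangle = \langle M, L \rangle$ is $(m+1)$-dimensional. The verification rests on two facts about the Veronese. First, $\nu(L)$ is a rational normal curve of degree $d$ spanning a $\PP^d$, so any $d + 1$ distinct points on it form a basis; in particular $\nu(p) = \sum_i \lambda_i \nu(q_i)$ for a unique choice of $\lambda_i$, and every $\lambda_i \ne 0$ (otherwise $d + 1$ distinct points on the rational normal curve would be linearly dependent). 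Second, using the standard identification $\langle \nu(V') \rangle = \PP(S^d \wt{V'})$ for a linear $V' \subset \PP^n$, a short basis computation gives $\langle \nu(L) \rangle \cap \langle \nu(M) \rangle = \langle \nu(L \cap M) \rangle = \{\nu(p)\}$. Combined with $\nu(p) \notin \langle \nu(A') \rangle$ (by linear independence of $\nu(A)$) and $\langle \nu(A') \rangle \subset \langle \nu(M) \rangle$, this gives $\langle \nu(A') \rangle \cap \langle \nu(L) \rangle = \emptyset$, so $\nu(B_1)$ consists of $t + d$ linearly independent points.

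To close the inductive step, expand the given decomposition $q = \sum_{a \in A'} c_a \nu(a) + c_p \nu(p)$ (all $c_a, c_p$ nonzero by irredundancy of $\nu(A)$) using the expression for $\nu(p)$ above to get $q = \sum_{a \in A'} c_a \nu(a) + \sum_i c_p \lambda_i \nu(q_i)$, the unique expansion of $q$ in the basis $\nu(B_1)$ of $\langle \nu(B_1) \rangle$, with every coefficient nonzero. Hence no proper subset of $\nu(B_1)$ spans $q$. The main technical point is the intersection formula $\langle \nu(L) \rangle \cap \langle \nu(M) \rangle = \{\nu(p)\}$: a crude dimension count does not force this intersection to be a single point, and it is the symmetric-power interpretation of Veronese spans that makes it automatic whenever $L$ meets $M$ exactly at $p$.
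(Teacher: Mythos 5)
Your construction is exactly the paper's: induct on $n-m$, and in the codimension-one step trade one point $p\in A$ for $d+1$ distinct points of a line $L$ with $L\cap M=\{p\}$, using that $d+1$ points of $L$ impose the value at $p$ on degree-$d$ forms. Where you genuinely diverge is in certifying irredundancy of the enlarged set. The paper argues by contradiction, splitting into the two cases according to whether the omitted point lies in $G$ or in $A\setminus\{a\}$; the first case requires a claim about the restricted linear system $\Vv\subseteq|\Oo_L(d)|$ being complete and the construction of a divisor $K$ separating $a$ from $A\setminus\{a\}$, while the second case invokes $\langle\nu(L)\rangle\cap\langle\nu(M)\rangle=\{\nu(a)\}$. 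You instead prove outright that $\nu(B_1)$ is linearly independent --- via the same key intersection $\langle\nu(L)\rangle\cap\langle\nu(M)\rangle=\langle\nu(L\cap M)\rangle$, correctly justified through $\langle\nu(V')\rangle=\PP(S^d\wt{V'})$ and an adapted monomial basis --- and then read off irredundancy from the unique coordinate expansion of $q$, whose coefficients are all nonzero because the original decomposition was irredundant and because $d+2$ distinct points on a degree-$d$ rational normal curve satisfy a relation with all coefficients nonzero. Your route is cleaner and more uniform: it avoids the case analysis and the linear-system claim entirely, and it shows that the paper's genericity assumption on $G$ is unnecessary (any $d+1$ distinct points of $L\setminus\{p\}$ work). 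Note that you do use, implicitly, that irredundant spanning forces linear independence of $\nu(A)$; that is true (a dependent set has a proper subset with the same span) and is the only unstated step. I see no gap.
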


\begin{proof}
By induction on the integer $n-m$ we reduce to the case $n-m=1$. Fix $a\in A$ and take any line $L\subset \PP^n$ such that
$L\cap M =\{a\}$. Fix a general $G\subset L\setminus \{o\}$ such that $\#G =d+1$ and take $B:= (A\setminus \{a\})\cup G$.
We have $\#B=t+d$. Since any $D\in |\Oo _{\PP^n}(d)|$ contains $L$, we have $\nu (a)\in \langle \nu(B)\rangle$.
Since $A\setminus \{a\}\subset B$, we get $q\in \langle \nu (B)\rangle$. Thus to conclude it is sufficient to prove that
$\nu(B)$ irredundantly spans $q$. Assume that is is not the case, i.e. assume the existence of $B'\subset B$ such that $\#B'
=\#B -1$ and $q\in \langle \nu (B')\rangle$. Set $\{p\}:=  B\setminus B'$.

\quad (a) Assume $p\in G$. Let $\Vv \subseteq |\Oo _L(d)|$ the projectivization of the image of $H^0(\Ii _{A\setminus
\{0\}}(d))$ by the restriction map
$H^0(\Oo _{\PP^n}(d))\to H^0(\Oo _L(d))$. 

\quad {\bf Claim 1:} $\Vv =|\Oo _L(d)|$.

\quad {\bf Proof of Claim 1:} $\Vv$ contains all divisors $a+D$ with $D$ effective of degree $d-1$ (take the image
of all degree $d$ forms on $\PP^n$ with an equation of $M$ as one of their forms. Thus $\Vv$ has at most codimension $1$ in
$|\Oo _L(d)|$ and to prove that $\Vv =|\Oo _L(d)|$ it is sufficient to prove that $a$ is not a base point of $\Vv$. Since $A$
irredundantly spans $q$, there is $T\in |\Oo_M(d)|$ containing $A\setminus \{a\}$, but not containing $a$. For any $o\in
\PP^n\setminus M$ the cone with vertex $o$ and $T$ as its base does not contain $a$, concluding the proof of Claim 1.

By Claim 1 there is $K\in |\Oo _{\PP^n}(d)|$ containing $E$, but not containing $a$. Thus $K\nsupseteq M$.
Thus $K_{|M}$ vanishes on $A\setminus \{a\}$, but not $a$. Since $A$ irredundantly spans $q\in \langle \nu (M)\rangle$, we have
$\langle \nu (A)\rangle =\langle \nu (A')\cup \{q\}\rangle$. Thus $K_{|M}$ shows that $q\notin \langle \nu (E)\rangle$, a
contradiction. 

\quad (b) Assume $p\in A\setminus \{a\}$.  The curve $\nu (L)$ is a degree $d$ rational normal curve in its linear span
and $\langle \nu (L)\rangle \cap \langle \nu(M)\rangle =\{\nu(a)\}$. Since  $\{a\}\cup G\subset
L$ and
$q\langle
\in \nu (M)\rangle$, we get $q\notin \langle \nu (E)\rangle$, a contradiction.
\end{proof}

\section{Concision for tensor decompositions}\label{St}

Let $W:= \PP^{n_1}\times \cdots \times \PP^{n_k}$, $n_i>0$ for all $i$, be a multiprojective space and $\nu : X\to \PP^N$, $N =(n_1+1)\cdots (n_k+1)-1$, its Segre embedding. Set $X:= \nu(W)$.
Let $\pi _i: W\to \PP^{n_i}$ denote the projection of $W$ onto its $i$-th factor.
Take a finite set $S\subset X$ and write $S =\nu (A)$ with $A\subset W$. The minimal Segre subvariety of $X$ containing $S$ is the Segre variety $\nu (\prod _{i=1}^{k} \langle \pi
_i(A)\rangle)$. Thus for any finite $S$ it is easy, quick and very cheap to determines the minimal Segre variety containing
$S$.

\begin{proposition}\label{t1}
Let $Y'$ be a multiprojective space. Fix an integer $m\ge 2$ and fix $o\in \PP^m$. Set $W:= Y'\times \PP^m$ and $Y:= Y'\times \{o\}$. Let $\nu: W\to \PP^N$ be the Segre embedding of $W$. Set $X:= \mu(W)$. Take $q\in \langle \nu (Y)\rangle$.

\quad (a) For any integer $t\le m-1+r_X(q)$ no $B\in \Ss (X,q,t)$ is concise for $X$.

\quad (b) If $q$ is concise for $\nu(Y)$, then there is $B\in \Ss (X,q,r_X(q)+m)$ concise for $X$.
\end{proposition}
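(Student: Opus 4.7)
The plan is to work in the ambient tensor space. Write $\CC^{m+1}=\langle v_o\rangle\oplus V'$ with $V'\cong \CC^m$ a chosen complement, so that $\langle\nu(W)\rangle$ splits as the direct sum of $\langle \nu(Y)\rangle$ (the ``$v_o$-slice'') and $\langle \nu(Y')\rangle\otimes V'$. For any $B\subset W$ with $\nu(B)\in \Ss(X,q,t)$, choose lifts $v_b=v_{y_b}\otimes v_{z_b}$ of $\nu(b)$ and decompose $v_{z_b}=\alpha_b v_o+v_{z_b}'$ with $v_{z_b}'\in V'$. Irredundancy forces $\{v_b\}$ to be linearly independent in $\langle\nu(W)\rangle$ and the coefficients $c_b$ in $v_q=\sum_b c_b v_b$ to be nonzero. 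Set $B_0=B\cap Y$ and $B_1=B\setminus Y$ (equivalently, $v_{z_b}'=0$ exactly when $b\in B_0$). Since $q\in\langle\nu(Y)\rangle$, the projection of $v_q$ onto $\langle \nu(Y')\rangle\otimes V'$ vanishes, which yields the key relation
\[
\sum_{b\in B_1}c_b\,v_{y_b}\otimes v_{z_b}'=0\qquad\text{in }\langle \nu(Y')\rangle\otimes V'. \qquad (\ast)
\]

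For (a), suppose $\nu(B)$ is concise for $X$; the goal is $|B|\ge r_X(q)+m$. Conciseness of $\nu(B)$ in the $\PP^m$ factor, together with $\pi_{\PP^m}(B_0)\subseteq\{o\}$, forces $\{v_{z_b}':b\in B_1\}$ to span $V'$. Contracting $(\ast)$ against a basis of the dual of $V'$ produces $m$ relations $\sum_{b\in B_1}c_b\beta_{b,\ell}v_{y_b}=0$, $\ell=1,\dots,m$, whose $m\times|B_1|$ coefficient matrix has rank $m$ since its rows span $V'$ (and the diagonal scaling by the $c_b$'s is invertible). Pick $B_1''\subset B_1$ of size $m$ so that the corresponding $m\times m$ block is invertible; solve for the $v_{y_b}$ with $b\in B_1''$ in terms of the $v_{y_{b'}}$ with $b'\in B_1\setminus B_1''$, and substitute into the $v_o$-slice identity
\[
v_q^{(0)}\;:=\;\sum_{b\in B_0}c_b v_{y_b}+\sum_{b\in B_1}c_b\alpha_b v_{y_b}\in\langle \nu(Y')\rangle.
\]
This writes $v_q^{(0)}$ as a linear combination of the $|B|-m$ rank-one $Y'$-tensors indexed by $B\setminus B_1''$. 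Since $r_{\nu(Y')}(v_q^{(0)})=r_X(q)$ by concision, we conclude $|B|\ge r_X(q)+m$, contradicting $t\le m-1+r_X(q)$.

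For (b), take $A\in \Ss(\nu(Y),q)$; then $|A|=r_X(q)$, and identifying $A\subset Y'$, concision of $q$ for $\nu(Y)$ forces $\pi_i(A)$ to span each factor $\PP^{n_i}$ of $Y'$. Fix $a^\ast\in A$ and choose $o_1,\dots,o_{m+1}\in \PP^m$ generically so that $v_{o_1},\dots,v_{o_{m+1}}$ form a basis of $\CC^{m+1}$ and $v_o=\sum_j\mu_j v_{o_j}$ with every $\mu_j\ne 0$. Using $v_{a^\ast}\otimes v_o=\sum_j\mu_j v_{a^\ast}\otimes v_{o_j}$ to split the $a^\ast$-contribution gives the candidate
\[
B\;:=\;\{(a,o):a\in A\setminus\{a^\ast\}\}\ \cup\ \{(a^\ast,o_j):j=1,\dots,m+1\}\subset W,
\]
with $|B|=r_X(q)+m$. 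Conciseness of $\nu(B)$ for $X$ is immediate: $\pi_{\PP^m}(B)\supseteq\{o_1,\dots,o_{m+1}\}$ spans $\PP^m$, and $\pi_i(B)=\pi_i(A)$ spans each factor of $Y'$.

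The main obstacle is the irredundancy of $\nu(B)$ in (b), i.e.\ the linear independence of $\{v_b:b\in B\}$ (the coefficients are visibly nonzero). A vanishing combination projected onto $\langle \nu(Y')\rangle\otimes V'$ gives $v_{a^\ast}\otimes\sum_j\lambda_j v_{o_j}'=0$, so $(\lambda_j)$ lies in $\ker(\CC^{m+1}\to V',\ \mu\mapsto\sum_j\mu_j v_{o_j}')$, which is one-dimensional and generated by $(\mu_j)$ under our generic choice. The combination then collapses to a multiple of $v_{a^\ast}\otimes v_o$ with scalar $\sum_j\mu_j\alpha_j$; this equals the $v_o$-coefficient of $v_o=\sum_j\mu_j v_{o_j}$ itself, namely $1\ne 0$. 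The linear independence of $\{v_a\otimes v_o:a\in A\}$, inherited from that of $\{v_a\}_{a\in A}$, then forces all coefficients to vanish. Once the splittings of $W$ into $Y'\times\PP^m$ and of $\CC^{m+1}$ into $\langle v_o\rangle\oplus V'$ are set up carefully, the identity $(\ast)$ drives both directions.
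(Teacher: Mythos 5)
Your proof is correct, and while it proves the same statement by exploiting the same underlying phenomenon (the $\PP^m$ factor must absorb $m$ extra terms), the mechanism differs from the paper's in both parts. For (a), the paper picks a hyperplane $H\subset\PP^m$ through at least $m$ of the points $\pi(B)$ and linearly projects from $\langle\nu(Y'\times H)\rangle$, so that at least $m$ points of $\nu(B)$ fall into the centre and the survivors give a too-short decomposition of $\ell(q)=q$; you instead split $\CC^{m+1}=\langle v_o\rangle\oplus V'$, extract the syzygy $(\ast)$ from the vanishing of the $V'$-component of $v_q$, and use the rank-$m$ coefficient matrix to eliminate $m$ summands from the $v_o$-slice. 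Your route is more self-contained: it sidesteps the (unaddressed in the paper) need to check that the centre of projection avoids $\langle\nu(Y)\rangle$, i.e. that $H$ can be taken with $o\notin H$, and it never leaves the ambient tensor space. For (b), your candidate set $B$ is exactly the paper's (replace one point of a minimal decomposition by $m+1$ points differing only in generic last coordinates), but you verify irredundancy by proving outright that $\{v_b\}_{b\in B}$ is linearly independent with all coefficients of $v_q$ nonzero, whereas the paper argues by cases on which point is dropped and falls back on the projection of step (a); your version is cleaner and closes the slightly garbled case analysis in the paper's step (b). The only cost of your approach is that it is coordinate-heavy and specific to the two-block splitting $\langle v_o\rangle\oplus V'$, whereas the paper's projection formalism generalizes more directly to the Segre--Veronese situations discussed in its last section.
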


\begin{proof}
Let $\pi: W\to \PP^m$ denote the projection onto the last factor of $W$. Assume there is $S\in \Ss (X,q,t)$ which is concise
for
$X$ with $S =\nu (A)$.  Since $S$ is concise for $X$, we have $\langle \pi (A)\rangle =\PP^m$. Thus there is a hyperplane
$H\subset \PP^m$ such that $\#H\cap \pi(A)\ge m$. Set $D:= \pi^{-1}(H)$. $D$ is a hypersurface isomorphic to $Y\times
\PP^{m-1}$ and $\#D\cap A\ge m$. Thus $A':= A\setminus A\cap D$ has (by concision) cardinality
$\le t-m<r_{\nu(Y)}(q)$. Set $U:= \langle \nu (D)\rangle$ and let $\ell : \PP^N\setminus U\to \PP^r$, $r=N-\dim U-1$, denote the linear projection from $U$. 
By the definition of Segre embedding we have $N+1 =(m+1)(\dim \langle \nu(Y)\rangle)$. Since $H\cong Y'\times \PP^{m-1}$, we have $\dim U +1 = m(\dim \langle \nu
(Y)\rangle)$. Hence $r =\dim \langle \nu(Y)\rangle$. By concision for rank $1$ tensors we have $U\cap X =\nu(D)$ and $\langle
\nu (Y)\rangle \cap U=\emptyset$. Thus $\ell _{X\setminus D} :X\setminus D\to \PP^r$ is a morphism. Note that for each
$(u,v)\in Y\times D \setminus D$ we have $\ell (\nu (u,v)) = \nu (u,o)$. Since $q\in \langle \nu (Y)\rangle$ and $\langle \nu
(Y)\rangle \cap U=\emptyset$ we may identify $\PP^r$ with $\langle \nu (Y)\rangle$ and say that, up to this identification, we
have $\ell (q)=q$; alternatively we may say that $\ell (q)$ and $q$ have the same $\nu(Y)$-rank and that $\Ss (\nu (Y),q,t))
=\Ss (\nu(Y),\ell (q),t)$ for any $t$. Since $q\notin U$, $\ell (\langle \nu(B)\rangle \setminus \langle \nu(D)\rangle)$ is a
linear space spanning $\ell (q)$. Since $\langle \nu (B)\rangle$ is spanned by the linearly independent set $\nu(B)$, $\ell
(\langle \nu(B)\rangle \setminus \langle \nu(D)\rangle)$ is spanned by the set $\ell (B\setminus B\cap D)$ with cardinality
$<r_X(q)$, a contradiction. 

\quad (b) Take $E\subset Y$ such that $\nu (E)\in \Ss (\nu(Y),q)$. By concision we have $\#E =r_X(q)$. Write $W = \PP^{n_1}\times \cdots \times \PP^{n_k}$ with $n_k=m$
and $Y =\PP^{n_1}\times \cdots \times \PP^{n_{k-1}}\times \{o\}$. Fix $a =(a_1,\dots ,a_k)\in A$. Obviously $a_k=o$. Fix $m+1$ general points $c_0,\dots ,c_m\in \PP^m$
and set $u_i =(a_1,\dots ,a_{k-1},c_i)$. Set $E:= (A\setminus \{a\})\cup \{u_0,\dots ,u_m)$. We have $\#E = r_X(q)+m$ and $W$ is the minimal multiprojective space containing $E$.
Thus to conclude the proof it is sufficient to prove that $\nu(E)$ minimally spans $q$. Note that $\langle \nu (A)\rangle = \langle \nu (A')\cup \{o\}$. In the set-up of 
For general $c_0,\dots ,c_m$ the point $o$ is not contained in the linear span of any proper subset of $\{u_0,\dots ,u_m\}$. Thus for any proper subset $E'$ of $E$ containing $A\setminus \{o\}$
we have $q\notin \langle \nu (E')\rangle$. Note that $\langle \nu (A)\rangle = \langle \nu (A')\cup \{o\}$. Assume $q\in \langle \nu(J\cup \{u_0,\dots,u_m\})$ with $J\subsetneq A\setminus \{a\}$. Let $H\subset \PP^m$ be the hyperplane spanned by $c_1,\dots ,c_m$. Take $\ell$ as in step (a). Since $\#J \le r_X(q)-2$, we would get that $q$ has rank $<r_X(q)$.
\end{proof}

\section{Segre-Veronese varieties}\label{Ss}
For any multiprojective space $W =\PP^{n_1}\times \cdots \times \PP^{n_k}$ and all positive integers $d_1,\dots ,d_k$
let $\nu_{d_1,\dots ,d_k} W\to \PP^N$, $N= -1 +\prod _{i=1}^{k}\binom{d_i+k_i}{n_i}$, denote the Segre-Veronese embedding of
$Y$ with multidegree $(d_1,\dots ,d_k)$. Since concision holds for Segre-Veronese embeddings, it is natural to consider if
there are irredundantly spanning set with cardinality rank $+1$ or rank $+d_i$.

\begin{remark}\label{sv1}
Fix integers $m>0$, $d>0$ and take $o\in \PP^m$. Set $W = Y\times \PP^m$ . Fix $q\in \langle \nu_{d_1,\dots,d_k,d}(Y\times
\{o\})\rangle$ and call $\rho$ its rank. If $d=1$ there is $S\subset W$ such that $\#S=\rho+1$ and $\nu_{d_1,\dots,d_k,d}(S)$
irredundantly spans
$q$ (just use the proof of Theorem \ref{1i2}). Moreover, if $m\ge 2$ there is no concise $S$ irredundantly spanning $q$ until
$\#S = \rho+m$. Now assume $d>1$. We may repeat the proof of Proposition \ref{v3} with $M =Y\times \{o\}$ and find
that $\Ss (\nu_{d_1,\dots ,d_k,m}(W),q,\rho +dm)\ne \emptyset$.
\end{remark}

\begin{remark}\label{sv2}
Proposition \ref{v3} may be extended with minimal modifications to an inclusion $Y\subset W$ of multiprojective spaces with
the same number of non-trivial factors.
\end{remark}

\end{document}